\theoremstyle{plain}
\newtheorem{theorem}{Theorem}
\newtheorem{lemma}[theorem]{Lemma}
\newtheorem{remark}[theorem]{Remark}
\newtheorem{corollary}[theorem]{Corollary}
\theoremstyle{definition}
\newcommand{\DEF}[1]{\emph{\color{blue} #1}}
\newcommand{\B}[1]{\mathbb{#1}}
\newcommand{\excessOps}{A}
\DeclareMathOperator*{\Ass}{Ass}
\DeclareMathOperator*{\Frac}{Frac}
\DeclareMathOperator*{\nil}{nil}
\newcommand{\p}{\partial}
\newcommand{\ideal}[1]{\langle #1 \rangle}
\DeclareMathOperator*{\Span}{span}
\newcommand{\KK}{{\B K}}
\newcommand{\mm}{\mathfrak{m}}
\newcommand{\pp}{\mathfrak{p}}
\newcommand{\xx}{{\boldsymbol{x}}}
\newcommand{\yy}{{\boldsymbol{y}}}
\renewcommand{\tt}{{\boldsymbol{t}}}
\newcommand{\rat}[2]{#2^{(#1)}}
\title{Local dual spaces and primary decomposition
}
\author{Justin Chen, Marc H\"ark\"onen, Anton Leykin}
\thanks{Partially supported by NSF DMS award~\#2001267.}
\begin{document}
\address{School of Mathematics, Georgia Tech, Atlanta GA, USA}

\begin{abstract}
  Generalizing the concept of the Macaulay inverse system, we introduce a way to describe localizations of an ideal in a polynomial ring. This leads to an approach to the differential primary decomposition as a description of the affine scheme defined by the ideal.  
\end{abstract}
\maketitle

This paper is dedicated to the memory of Agnes Szanto.
She had a large impact on the community of symbolic computation as witnessed in part by her work cited in relation to this article.
Agnes is greatly missed.

\section{Introduction}

Let $R := \KK[\xx] = \KK[x_1,\dotsc,x_n] $ be a polynomial ring over a field $\KK$ of characteristic $0$, and $I \subseteq R$ an $R$-ideal.
A basic fact of commutative algebra is that describing $I$ (or equivalently $R/I$) is equivalent to describing
\begin{itemize}
    \item its \DEF{associated primes} $\Ass(R/I)$ and
    \item localizations $I_\pp$ for every $\pp\in\Ass(R/I)$.  
\end{itemize}
This short article explains how to use the concept of \DEF{local dual spaces} to describe the latter and pinpoints the data necessary for \DEF{differential (primary) decomposition} of $I$ in terms of local dual spaces.

\smallskip

A thorough study into the early history focused on Macaulay's research %
\cite[\S 30.4]{mora2005solving} sheds light on the involvement of Lasker, Noether, Macaulay, and then later Gr\"obner in the development of the key concepts we are about to discuss. In many articles \emph{bases} of certain local dual spaces are referred to as \emph{(Macaulay) dual bases} or \emph{(Macaulay) inverse systems}. 
Sir F.\,S.\,Macaulay himself used the term \emph{modular forms} in the seminal 1918 book~\cite{macaulay1994algebraic}. 

The language of dual spaces found various applications. One example is that to polynomial systems with isolated singular solutions as in several papers coauthored by Szanto~\cite{hauenstein2017deflation,mantzaflaris2020punctual,mantzaflaris2023certified}.
Another example comes from the area of partial differential equations and was popularized in \cite{sturmfels2002solving,cid2021primary,el2023linear} among applied algebraists: \emph{Ehrenpreis's fundamental principle} (also known as the \emph{fundamental principle of Ehrenpreis and Palamodov}) makes it possible to write down a general solution to a system of PDEs with constant coefficients as long as the irreducible decomposition of the so-called \emph{characteristic variety} is known.

The latter application relates to the notion of \DEF{Noetherian operators} that became an object of several recent studies~\cite{cid2021primary,chen2022noetherian,cid2023primary} with computational techniques implemented in computer algebra system Macaulay2~\cite{chen2023noetherian,wwwM2}.

By the end of reading this article, the reader should understand that Noetherian operators that one may use to describe primary components are yet another incarnation of elements in local dual spaces that correspond to the associated primes.

\section{Local dual space}
The linear differential operators with polynomial coefficients form the \DEF{Weyl algebra} $W 
= R \langle \p_\xx \rangle 
= \KK[x_1,\dots,x_n] \langle \p_1, \ldots, \p_n \rangle$. We shall forget about multiplication (i.e., composition) of differential operators in $W$, but retain \emph{two} $R$-module structures: for $f\in R$ and $D := \sum c_\alpha \p^\alpha \in W$ we have  
\begin{itemize}
    \item the left action $fD := \sum (fc_\alpha) \p^\alpha$, and
    \item the right action: $D\cdot f$ is the differential operator that multiplies the input by $f$ before applying $D$.    
\end{itemize}
The relation between the two actions is given by
$$\p_i \cdot x_i = x_i\p_i + 1 \text{ and } \p_i \cdot x_j = x_j\p_i,\text{ for } i\neq j.$$

\subsection{Definition}

For an $R$-algebra $A$, let $W_A := A \otimes_R W$.
Let $\mm$ denote a maximal ideal in $R$, and $\kappa(\mm) := R/\mm$ the \DEF{residue field} at $\mm$.
We call $W_{\kappa(\mm)}$ the \DEF{local differential space} (at $\mm$).
In what follows $W_{\kappa(\mm)}$ is perceived as a left $\kappa(\mm)$-vector space and a right $R$-module.
\begin{remark}\label{ex:multiplication-derivation}
Note that the \emph{right} multiplication may be perceived as \emph{anti-differentiation} if $\mm$ corresponds to a \DEF{rational point} $(p_1,\dotsc,p_n) \in \KK^n$.
Indeed, multiplying an operator $D$ by $(x_i-p_i)$ on the right has the effect of taking a derivative of $D$ with respect to $\p_i$.
\end{remark}

We define the \DEF{local dual space} of $I$ at $\mm$ as
\begin{align*}
  D_\mm[I] := \{ D \in W_{\kappa(\mm)} \mid D\cdot f \in \mm, \,\forall f \in I \},
\end{align*}
i.e. the set of all operators in $W_{\kappa(\mm)}$ that vanish at $\mm$, when applied to any polynomial in $I$.
For a rational point $\mm = \ideal{x_1-p_1,\dots,x_n-p_n}$, the residue field $\kappa(\mm)$ is isomorphic to the ground field $\KK$, and the local dual space is known as the \emph{(Macaulay) inverse system} of $I$.

For a general (i.e. nonrational) $\mm$, the local dual space $D_\mm[I]$ inherits the structure of its ambient $W_{\kappa(\mm)}$. It is 
\begin{itemize}
    \item a $\kappa(\mm)$-vector space via multiplication on the left; 
    \item a right $R$-module.
\end{itemize}

\begin{remark}\label{ex:primary-finite-dim} 
    Note that if $I$ is $\mm$-primary, then $D_\mm[I]$ is finite-dimensional as a $\kappa(\mm)$-vector space.
    Indeed, $I \supset \mm^d$ for some $d$, which implies that the order of $D\in D_\mm[I]$ is bounded by $d$. 
\end{remark}

\subsection{Local dual space describes localization}\label{sec:LDS-describes-localization}
Consider the localization $R_\mm$ of the polynomial ring $R$ at a maximal ideal $\mm$. We can describe the localization of $I$ by its contraction to $R$ that, in turn, can be seen as an (infinite) intersection of nested $\mm$-primary ideals.

It is a good exercise to show that
\begin{equation}\label{eq:infinite-intersection}
    I_\mm \cap R = \bigcap_{d \ge 1} (I + \mm^d)
\end{equation}
On the other hand, introduce 
\DEF{truncated} local dual spaces $D^{(d-1)}_\mm[I] := D_\mm[I+\mm^d]$, which collect operators in $D_\mm[I]$ of order at most $d-1$.
Then the local dual space is the infinite sum of these nested spaces:
\begin{equation}\label{eq:infinite:sum}
    D_\mm[I] = \sum_{d \ge 1} D^{(d-1)}_\mm[I].
\end{equation}

\begin{lemma}\label{ex:truncation} 
For any $I \subseteq R$ and $d \ge 1$, there is an equality
\[
I + \mm^d = \{ f \in R \mid D \cdot f \in \mm, \, \forall D \in D^{(d-1)}_\mm[I] \}.
\]
\end{lemma}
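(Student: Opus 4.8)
The plan is to recognise the claim as an instance of Macaulay's biduality (``double annihilator'') correspondence, and to prove it by realising both sides of the asserted equality inside a single finite-dimensional perfect pairing. Write $J := I + \mm^d$, which is $\mm$-primary (the case $I = R$ being trivial). The inclusion ``$\subseteq$'' is immediate: by the definition of $D_\mm[J] = D^{(d-1)}_\mm[I]$, every $D$ in it satisfies $D\cdot f \in \mm$ for all $f \in J$. For ``$\supseteq$'', note that every $D \in D^{(d-1)}_\mm[I]$ has order $\le d-1$, so it carries $\mm^d$ into $\mm$; hence both sides contain $\mm^d$, and it suffices to prove the equality modulo $\mm^d$, i.e.\ to identify $J/\mm^d$ with the image in $A := R/\mm^d$ of the right-hand side.

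Next I would introduce the pairing. Because an operator of order $\le d-1$ sends $\mm^d$ into $\mm$, the assignment $(\bar f, D) \mapsto \overline{D\cdot f}$ --- the class in $\kappa(\mm)$ of the constant term of the operator $D\cdot f$, equivalently of $D$ applied to $f$ and evaluated at $\mm$ --- is a well-defined $\KK$-bilinear pairing $P \colon A \times V \to \kappa(\mm)$, where $V := D_\mm[\mm^d]$ is the space of all operators in $W_{\kappa(\mm)}$ of order $\le d-1$. It satisfies the adjunction $P(\overline{rf}, D) = P(\bar f, D\cdot r)$ for $r \in R$; and, using that $D_\mm[J]$ is a right $R$-module (so that membership in the right-hand side is controlled by the vanishing of $\overline{D\cdot f}$ alone), one checks that under $P$ the subspace $D_\mm[J] \subseteq V$ is exactly the annihilator of $J/\mm^d$, while the right-hand side of the lemma is exactly the double annihilator of $J/\mm^d$. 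So everything comes down to the biduality $(J/\mm^d)^{\perp\perp} = J/\mm^d$, i.e.\ to showing that $P$ is a perfect pairing.

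For this I would pass to the completion. By Cohen's structure theorem ($\mathrm{char}\,\KK = 0$) there is an isomorphism $\widehat{R_\mm} \cong \kappa(\mm)[[t_1,\dots,t_n]]$, so $A \cong \kappa(\mm)[t]/\langle t\rangle^d$ is genuinely a $\kappa(\mm)$-algebra. Since $\kappa(\mm)/\KK$ is separable, $\partial_{x_1},\dots,\partial_{x_n}$ form an invertible $\widehat{R_\mm}$-linear combination of $\partial_{t_1},\dots,\partial_{t_n}$; this change of variables preserves the filtration of operators by order, so it identifies $V$ with the space of order-$\le(d-1)$ operators in the $t$-variables and identifies $P$ with the classical truncated Macaulay pairing $\kappa(\mm)[t]/\langle t\rangle^d \times \kappa(\mm)\langle \partial_t\rangle_{\le d-1} \to \kappa(\mm)$, $(t^\alpha, \partial_t^\beta)\mapsto \beta!\,\delta_{\alpha\beta}$ (the factors $\beta!$ being harmless in characteristic $0$). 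That pairing is perfect --- both sides have $\kappa(\mm)$-dimension $\binom{n+d-1}{n}$ and the displayed Gram matrix is invertible --- so biduality of perfect pairings of finite-dimensional $\kappa(\mm)$-vector spaces gives $(J/\mm^d)^{\perp\perp} = J/\mm^d$, as the ideal $J/\mm^d$ is in particular a $\kappa(\mm)$-subspace. One could instead avoid completions by composing $P$ with the trace $\Tr_{\kappa(\mm)/\KK}$ to obtain an honest $\KK$-valued pairing and checking its nondegeneracy directly (after reduction to the rational case, where $\overline{\partial^\alpha(x-p)^\beta} = \beta!\,\delta_{\alpha\beta}$), at the cost of the extra identity that right multiplication by a suitable lift of $c_0\in\kappa(\mm)$ acts on order-$\le(d-1)$ operators as left scaling by $c_0$, cf.\ \Cref{ex:multiplication-derivation}.

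The conceptual content is light --- this is Macaulay duality --- and the real work lies in the bookkeeping for general, non-rational $\mm$: because $A = R/\mm^d$ is not canonically a $\kappa(\mm)$-vector space one cannot simply cite the classical statement, and the step I expect to demand the most care is making the coordinate change $\partial_x \leftrightarrow \partial_t$ precise --- in particular that it remains invertible and order-preserving after reducing coefficients modulo $\mm$ --- or, in the trace variant, controlling the interaction of the left- and right-$R$-module structures on $W_{\kappa(\mm)}$.
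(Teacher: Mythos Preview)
Your argument is correct and takes a genuinely different route from the paper. The paper does not set up a perfect pairing at all: it treats the rational case as an exercise (pointing to \cite[Theorem~2.6]{marinari1993grobner}), and for a nonrational $\mm$ it extends scalars to the algebraic closure $\bar\KK$, where the extension of the $\mm$-primary ideal $I+\mm^d$ decomposes as an intersection of primary ideals supported at rational points, and then invokes \cite[Proposition~2.7]{marinari1993grobner}. In other words, the paper's proof is a reduction to the rational case by base change, outsourced to the literature. Your approach, by contrast, is intrinsic and self-contained: you realise both sides inside a single finite-dimensional pairing $R/\mm^d \times D_\mm[\mm^d]\to\kappa(\mm)$ and deduce the lemma from biduality, handling the nonrational case via Cohen's structure theorem (or the trace). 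What your approach buys is an explicit duality statement and independence from external references; what the paper's approach buys is brevity, at the cost of relying on \cite{marinari1993grobner} and on checking that both sides behave well under $\KK\hookrightarrow\bar\KK$. Your own diagnosis of where the care is needed is accurate: the delicate point is exactly the order-preserving invertibility of the change $\partial_x\leftrightarrow\partial_t$ after reducing coefficients modulo $\mm$ (equivalently, in the trace variant, that the $P$- and $P'$-orthogonals of $D_\mm[J]$ coincide because $D_\mm[J]$ is a $\kappa(\mm)$-subspace and the trace form on $\kappa(\mm)$ is nondegenerate).
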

\begin{proof}
  We invite the reader to show this statement in case of a rational point.
  One may also apply \cite[Theorem 2.6]{marinari1993grobner} to the $\mm$-primary ideal $P=I+\mm^d$ in the rational case.

  That Theorem is followed by discussion of the nonrational case: let $\bar \KK$ be the algebraic closure of field $\KK$; then the extension of $P$, an $\mm$-primary ideal, in $\bar\KK[\xx]$ equals the intersection of primary ideals supported at rational points. This results in the Theorem upgraded to a more general \cite[Proposition 2.7]{marinari1993grobner}, from which the conclusion to our Lemma in bthe general case follows. 
\end{proof}

Since truncations of $D_\mm[I]$ capture $I+\mm^d$ for all $d>0$, in view of \Cref{eq:infinite-intersection}, $D_\mm[I]$ captures the localization of $I$ at $\mm$.

\begin{remark}
  We leave it to the reader to verify that for a pair of ideals $I$ and $J$ the natural properties,
  \begin{equation}
    D_\mm[I+J] = D_\mm[I] \cap D_\mm[J] \quad\text{and}\quad D_\mm[I\cap J] = D_\mm[I] + D_\mm[J],
  \end{equation}
  hold as expected.
\end{remark}

\subsection{Reduction to dimension 0}\label{sec:reduction-to-dim-0}
For a prime $\pp \subseteq R$, one may choose a \DEF{regular system of parameters} $\tt \sqcup \yy$ for $R$ such that $\tt$ are \DEF{free variables}, i.e., a maximal set of elements whose cosets are  algebraically independent in $R/\pp$. 

Let us denote $$\rat{\tt}{\square} := \KK(\tt)\otimes_{\KK[\tt]}\square$$ the result of tensoring with the field of rational 
functions $\KK(\tt)$ (i.e. localizing at the multiplicative set $\KK[\tt] \setminus \{0\}$).
We consider $\rat{\tt}{R} = \KK(\tt)\otimes R \cong \KK(\tt)[\yy]$ and note that $\rat{\tt}{R}\subseteq R_\pp \subseteq \Frac(R)$.

\begin{remark}\label{ex:extension-of-p-is-maximal} 
The ideal $\rat{\tt}{\pp}$ is maximal in $\rat{\tt}{R}$ and we have  $I_\pp = \rat{\tt}{I}_{\rat{\tt}{\pp}}$, i.e., these are equal as subsets of $\Frac(R)$.

\end{remark}

To describe $I_\pp$, therefore, it suffices to describe $D_{\rat{\tt}{\pp}}[\rat{\tt}{I}]$.
By passing from $R$ to $\rat{\tt}{R}$ we may assume that the prime ideal $\rat{\tt}{\pp}$ is maximal.

\section{Primary decomposition}
The underlying goal of any algorithmic approach to primary decomposition of an ideal $I\subseteq R$ is to dissect the underlying affine scheme: find the associated primes $\Ass(R/I)$ and describe the localizations $I_\pp$ for all $\pp\in\Ass(R/I)$. That is --- in geometric terms --- ``decompose'' into components along which the local description of $I$ is well understood. 

\subsection{Classical primary decomposition}
Classically this goal is achieved by finding $\pp$-primary ideals $Q^{(\pp)}$ for each $\pp \in \Ass(R/I)$, such that 
\[
I = \bigcap_{\pp\in\Ass(R/I)} Q^{(\pp)}\,.
\]
Hence the word ``primary'' in ``primary decomposition''. The ideals $Q^{(\pp)}$ are determined uniquely only for minimal primes $\pp$.

\subsection{Differential primary decomposition}
Let $\mm$ be a maximal ideal of $R$.
Define the \DEF{excess dual space}
\begin{equation}\label{eq:finite-dual-quotient}
E_\mm[I] := \frac{D_{\mm}[I]} {D_\mm[I:\mm^\infty]},
\end{equation}
as the quotient $\kappa(\mm)$-vector space characterizing the extent to which $D_\mm[I]$ exceeds $D_\mm[I:\mm^\infty]$. 

\begin{lemma}\label{ex:excess-dual-has-finite-dim}
  The dimension of $E_\mm[I]$ is finite. %
\end{lemma}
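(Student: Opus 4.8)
The plan is to reduce the finiteness of $\dim_{\kappa(\mm)} E_\mm[I]$ to the finite-dimensionality already recorded in \Cref{ex:primary-finite-dim}. First I would unwind definitions: $E_\mm[I]$ is the quotient of $D_\mm[I]$ by $D_\mm[I : \mm^\infty]$, and since $I : \mm^\infty \supseteq I$ we have the containment $D_\mm[I : \mm^\infty] \subseteq D_\mm[I]$, so the quotient makes sense. The key observation is that the ``extra'' operators in $D_\mm[I]$ not coming from $D_\mm[I : \mm^\infty]$ should be controlled by an $\mm$-primary ideal, after which \Cref{ex:primary-finite-dim} applies.

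The main step is to produce an $\mm$-primary ideal $Q$ with $D_\mm[Q] \supseteq D_\mm[I]$ and such that $D_\mm[Q] \cap D_\mm[I:\mm^\infty]$ is not ``too large'' — more precisely, I want a surjection from $D_\mm[Q]$, or a suitable subquotient of it, onto $E_\mm[I]$. The natural candidate is $Q := (I : \mm^\infty) \cap (I + \mm^N)$ for $N$ large, or even just to work locally at $\mm$: passing to $R_\mm$, the ideal $I_\mm$ has a primary decomposition, and the $\mm$-primary component $Q^{(\mm)}$ (contracted to $R$) satisfies $I_\mm = (I : \mm^\infty)_\mm \cap Q^{(\mm)}_\mm$. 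Then by the additivity property $D_\mm[I \cap J] = D_\mm[I] + D_\mm[J]$ (stated in the remark preceding the lemma, applied after noting $D_\mm$ only sees the localization at $\mm$), we get $D_\mm[I] = D_\mm[I:\mm^\infty] + D_\mm[Q^{(\mm)}]$, hence a surjection $D_\mm[Q^{(\mm)}] \twoheadrightarrow E_\mm[I]$. Since $Q^{(\mm)}$ is $\mm$-primary, \Cref{ex:primary-finite-dim} gives $\dim_{\kappa(\mm)} D_\mm[Q^{(\mm)}] < \infty$, and therefore $\dim_{\kappa(\mm)} E_\mm[I] < \infty$.

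The step I expect to require the most care is justifying that $D_\mm[-]$ depends only on the localization at $\mm$, so that I may replace $I$ and $I : \mm^\infty$ by their local pictures and invoke the existence of the $\mm$-primary component — equivalently, justifying the identity $D_\mm[I] = D_\mm[I : \mm^\infty] + D_\mm[Q]$ for an appropriate globally-defined $\mm$-primary $Q$ (e.g. $Q = (I : \mm^\infty)' \cap (I + \mm^N)$, or the contraction of the local primary component). This hinges on \Cref{eq:infinite-intersection} and \Cref{ex:truncation}: for $N \gg 0$ the truncated dual spaces stabilize in the sense that $D^{(N-1)}_\mm[I] = D^{(N-1)}_\mm[I:\mm^\infty]$ whenever $\mm \notin \Ass(R/I)$ beyond what the primary component contributes, and more generally the discrepancy is captured in bounded order $\le N-1$. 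Once order-boundedness of a set of representatives for $E_\mm[I]$ is established, finite-dimensionality is immediate since operators of order $\le N-1$ in $W_{\kappa(\mm)}$ with, say, coefficients that are themselves constrained form a finite-dimensional space — but in general $W_{\kappa(\mm)}$ has infinite-dimensional pieces in each order, so one genuinely needs the $\mm$-primary comparison rather than a naive order bound alone. I would therefore lead with the decomposition argument as the backbone and treat the order-bound/localization compatibility as the technical lemma feeding into it.
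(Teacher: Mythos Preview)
Your backbone argument is exactly the paper's: take a primary decomposition, use that $D_\mm$ ignores components not contained in $\mm$ together with $D_\mm[I\cap J]=D_\mm[I]+D_\mm[J]$ to obtain $D_\mm[I]=D_\mm[Q^{(\mm)}]+D_\mm[I:\mm^\infty]$, and then invoke \Cref{ex:primary-finite-dim}. One small slip in your closing discussion: operators of order $\le N$ in $W_{\kappa(\mm)}$ already form a finite-dimensional $\kappa(\mm)$-space (basis $\{\p^\alpha : |\alpha|\le N\}$), so an order bound alone \emph{would} suffice---but establishing that bound is precisely what the primary-component comparison gives you.
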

\begin{proof}
      Let $\mathcal{A} = \Ass(R/I)$, and $I = \bigcap_{\pp \in \mathcal{A}} Q^{(\pp)}$ be a primary decomposition.
    A primary decomposition of the saturation is given by
    \begin{align*}
    I \colon \mm^\infty = \bigcap_{\pp \in \mathcal{A} \setminus \{\mm\}} Q^{(\pp)}
    \end{align*}
    Since the local dual space at $\mm$ describes the localization, thus ignoring the primary components not contained in $\mm$, we can rewrite the numerator and denominator in the definition of $E_\mm[I]$ below.
    \begin{align*}
      E_\mm[I] = \frac{D_\mm\left[\bigcap_{\pp \in \mathcal{A} \colon \pp \subseteq \mm} Q^{(\pp)} \right]}{D_\mm\left[\bigcap_{\pp \in \mathcal{A} \colon \pp \subsetneq \mm} Q^{(\pp)} \right]} = 
      \frac{D_\mm\left[ Q^{(\mm)} \right] + D_\mm\left[\bigcap_{\pp \in \mathcal{A} \colon \pp \subsetneq \mm} Q^{(\pp)} \right]}{D_\mm\left[\bigcap_{\pp \in \mathcal{A} \colon \pp \subsetneq \mm} Q^{(\pp)} \right]}
    \end{align*}
    concluding that $E_\mm[I] = D_\mm[Q^{(\mm)}] / D_\mm[I \colon \mm^\infty]$.
    Since $D_\mm[Q^{(\mm)}]$ is a finite-dimensional $\kappa(\mm)$-vector space, so is $E_\mm[I]$.
\end{proof}

\begin{theorem}\label{main-theorem}
  Suppose for each $\pp\in\Ass(R/I)$ we have the following data
  \begin{enumerate}
      \item a regular system of parameters $\tt \sqcup \yy$ of $R$ such that $\tt$ are the free variables for $\pp$ and 
      \item a finite set of differential operators $\excessOps_\pp \subseteq D_{\rat{\tt}{\pp}}[\rat{\tt}{I}] \subseteq \kappa(\rat{\tt}{\pp})\langle\p_\yy\rangle$ whose cosets span $E_{\rat{\tt}{\pp}}[\rat{\tt}{I}]$.
  \end{enumerate}
  
  Then the ideal $I$ can be recovered from $\Ass(R/I)$ and this data.  
\end{theorem}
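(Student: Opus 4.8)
The plan is to reconstruct $I$ as the intersection, over $\pp \in \Ass(R/I)$, of the contractions to $R$ of the localizations $I_\pp$, and to show that each such localization is determined by the given data. By the classical primary decomposition, $I = \bigcap_{\pp} Q^{(\pp)}$ for suitable $\pp$-primary $Q^{(\pp)}$, and since $Q^{(\pp)}$ need not be unique, the actual invariant we must pin down is $I_\pp \cap R$ for each $\pp$. So the first step is to reduce, for a fixed $\pp$, to describing $I_\pp$; by \Cref{ex:extension-of-p-is-maximal} we may pass to $S := \rat{\tt}{R} = \KK(\tt)[\yy]$ with the maximal ideal $\mm := \rat{\tt}{\pp}$, where $I_\pp = \rat{\tt}{I}_\mm$ as subsets of $\Frac(R)$; write $J := \rat{\tt}{I} \subseteq S$. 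It then suffices to recover $J_\mm \cap S$, equivalently (by \Cref{eq:infinite-intersection}) the family $\{J + \mm^d\}_{d \ge 1}$, equivalently (by \Cref{ex:truncation}) the truncated local dual spaces $D^{(d-1)}_\mm[J]$ for all $d$, equivalently (by \Cref{eq:infinite:sum}) the local dual space $D_\mm[J]$ itself as a subspace of $W_{\kappa(\mm)}$.

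The heart of the argument is therefore: from $\Ass(R/I)$ and the finite set $\excessOps_\pp$ whose cosets span $E_\mm[J] = D_\mm[J]/D_\mm[J:\mm^\infty]$, recover the full $\kappa(\mm)$-vector space $D_\mm[J]$ (which is infinite-dimensional in general), and moreover its right $S$-module structure, since the truncations $D^{(d-1)}_\mm[J]$ are read off from $D_\mm[J]$ together with that structure (they are the order-$\le d-1$ pieces). The key observation is that $D_\mm[J]$ is a right $S$-submodule of $W_{\kappa(\mm)}$ generated (as a right $S$-module) by $D_\mm[J:\mm^\infty] \cup \excessOps_\pp$. To see why the right $S$-module generated by these lands inside $D_\mm[J]$: right multiplication by $\mm$ decreases order (cf.\ \Cref{ex:multiplication-derivation}), and one checks directly that $D_\mm[J]$ is closed under right multiplication by $S$; and $D_\mm[J:\mm^\infty] \subseteq D_\mm[J]$ since $J \subseteq J:\mm^\infty$. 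Conversely, any $D \in D_\mm[J]$ has a representative in $D_\mm[J:\mm^\infty] + \Span_{\kappa(\mm)}(\excessOps_\pp)$ by definition of $E_\mm[J]$ and the spanning hypothesis, so in fact $D_\mm[J] = D_\mm[J:\mm^\infty] + \Span_{\kappa(\mm)}(\excessOps_\pp)$ already, and in particular its right $S$-module closure is itself — the point being that $D_\mm[J:\mm^\infty]$ must itself be recoverable.

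This last point is where the recursion enters, and is the main obstacle. The saturation $J : \mm^\infty$ has associated primes $\Ass(S/(J:\mm^\infty)) = \{\rat{\tt}{\qq} : \qq \in \Ass(R/I),\ \qq \subsetneq \pp\}$ (equivalently, after undoing the $\KK(\tt)$-extension, the associated primes of $I$ strictly contained in $\pp$), each of dimension strictly smaller than $\dim R/\pp$. So one induces on $\dim R/\pp$ (equivalently on the height, or on a linear extension of the containment order on $\Ass(R/I)$): the minimal primes are the base case, where $D_\mm[Q^{(\pp)}] = D_\mm[J]$ and the excess dual space already equals $D_\mm[J]$ — no saturation to subtract — and we must argue that the data $\excessOps_\pp$ for the \emph{strictly smaller} primes, which is part of the hypothesis, suffices to reconstruct $D_\mm[J:\mm^\infty]$, hence $D_\mm[J]$, hence $J_\mm \cap S$. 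Concretely, one would show $D_\mm[J:\mm^\infty] = \sum_{\qq \subsetneq \pp} D_\mm[\rat{\tt'}{(J:\mm^\infty)}_{\text{localized at }\qq} \cap \cdots]$ using the "$D_\mm[I \cap J] = D_\mm[I] + D_\mm[J]$" identity together with the fact that $D_\mm$ only sees the components inside $\mm$; care is needed because the regular systems of parameters $\tt' \sqcup \yy'$ attached to the smaller primes $\qq$ differ from $\tt \sqcup \yy$, so one must track how local dual spaces transform under the change of free variables, and check that $D_\mm[\text{(a }\qq\text{-primary ideal)}]$ is computed from $\excessOps_\qq$ by the same right-module-generation principle. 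Assembling these, $I = \bigcap_{\pp \in \Ass(R/I)} (I_\pp \cap R)$ recovers $I$, completing the induction.
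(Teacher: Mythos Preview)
Your proposal is correct and follows essentially the same route as the paper: recover $I$ as $\bigcap_\pp (I_\pp\cap R)$, reduce each $I_\pp$ to the zero-dimensional situation via $\rat{\tt}{R}$, and then reconstruct $D_{\rat{\tt}{\pp}}[\rat{\tt}{I}]$ as $D_{\rat{\tt}{\pp}}[\rat{\tt}{I}:(\rat{\tt}{\pp})^\infty] + \kappa(\rat{\tt}{\pp})\,\excessOps_\pp$, with the saturation term handled by Noetherian induction over the associated primes strictly contained in $\pp$. One small slip: for $\qq\subsetneq\pp$ you have $\dim R/\qq>\dim R/\pp$, not smaller, so the induction is on height (or the containment order) as you also say, not on $\dim R/\pp$ in the direction you indicate; and the detour through right $S$-module generation is unnecessary once you observe, as you do, that the $\kappa(\mm)$-span already suffices.
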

\begin{proof}
Recall that it is sufficient to recover the localization $I_\pp$ for every associated prime to reconstruct the ideal: $$I = \bigcap_{\pp\in\Ass(R/I)} \left(I_\pp \cap R\right).$$

Consider $\Ass(I_\pp\cap R)$, i.e., all associated primes that are contained in~$\pp$.  
Suppose that the localization $I_{\pp'}$ for every $\pp'\in\Ass(I_\pp\cap R)$ such that $\pp'\neq \pp$ has been described.
Then the local dual space  $D_{\rat{\tt}{\pp}}[\rat{\tt}{I}:(\rat{\tt}{\pp})^\infty]$ can be recovered following \Cref{sec:LDS-describes-localization}.

We conclude that $$D_{\rat{\tt}{\pp}}[\rat{\tt}{I}] = D_{\rat{\tt}{\pp}}[\rat{\tt}{I}:(\rat{\tt}{\pp})^\infty] + \kappa(\rat{\tt}{\pp})\excessOps_\pp$$ by the definition of the excess dual space. Thus we can recover $I_\pp$. 
\end{proof}

\Cref{main-theorem} results directly in the following (global) \DEF{membership test}.
\begin{corollary}
 A polynomial $f\in R$ belongs to $I$ if and only if 
\[
D \cdot [f] = 0 \in  \kappa(\rat{\tt}{\pp})
\]
for each $\pp\in\Ass(R/I)$ and each $D\in\excessOps_\pp$ where  $\excessOps_\pp$ is described in~\Cref{main-theorem}.
\end{corollary}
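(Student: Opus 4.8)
The plan is to reduce the global membership $f\in I$ to the family of local membership statements $f\in I_\pp\cap R$, one for each $\pp\in\Ass(R/I)$, and to read each of those off the local dual space $D_{\rat\tt\pp}[\rat\tt I]$, which by definition of the excess dual space (\Cref{eq:finite-dual-quotient}) and the spanning hypothesis on $\excessOps_\pp$ decomposes as $D_{\rat\tt\pp}[\rat\tt I]=D_{\rat\tt\pp}[\rat\tt I:(\rat\tt\pp)^\infty]+\kappa(\rat\tt\pp)\excessOps_\pp$. One implication is immediate: if $f\in I$, then its image $1\otimes f$ lies in $\rat\tt I$, so, as $\excessOps_\pp\subseteq D_{\rat\tt\pp}[\rat\tt I]$, the defining property of the local dual space gives $D\cdot(1\otimes f)\in\rat\tt\pp$, i.e.\ $D\cdot[f]=0$ in $\kappa(\rat\tt\pp)$, for every $D\in\excessOps_\pp$ and every $\pp\in\Ass(R/I)$.

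For the converse I would assume all the stated vanishing conditions and, via the identity $I=\bigcap_{\pp\in\Ass(R/I)}(I_\pp\cap R)$ from the proof of \Cref{main-theorem}, reduce to proving $f\in I_\pp\cap R$ for each $\pp$, by well-founded induction on the poset $(\Ass(R/I),\subseteq)$. Fixing $\pp$ with parameters $\tt\sqcup\yy$, the results \Cref{ex:extension-of-p-is-maximal}, \Cref{eq:infinite-intersection}, \Cref{ex:truncation} and \Cref{eq:infinite:sum}, applied with the maximal ideal $\rat\tt\pp$ of $\rat\tt R$, together say that $f\in I_\pp\cap R$ if and only if $D\cdot[f]=0$ in $\kappa(\rat\tt\pp)$ for every $D\in D_{\rat\tt\pp}[\rat\tt I]$. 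By the decomposition above, the hypothesis disposes of the $\excessOps_\pp$-summand, and for the summand $D_{\rat\tt\pp}[\rat\tt I:(\rat\tt\pp)^\infty]$ I would argue: base change to $\KK(\tt)$ commutes with saturation, and the local dual space depends only on the localization at $\rat\tt\pp$, so this summand equals $D_{\rat\tt\pp}[\rat\tt J]$ for $J=I:\pp^\infty$; moreover $(I:\pp^\infty)_\pp\cap R=\bigcap_{\pp'\in\Ass(R/I),\,\pp'\subsetneq\pp}(I_{\pp'}\cap R)$, because saturating by $\pp$ deletes exactly the components not contained in $\pp$ and localizing at $\pp$ retains exactly those strictly inside $\pp$; hence the induction hypothesis applied to each $\pp'\subsetneq\pp$ yields $f\in(I:\pp^\infty)_\pp\cap R$, and the same chain of results, now for $J$, gives $D\cdot[f]=0$ for all $D\in D_{\rat\tt\pp}[\rat\tt J]$. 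The base case is $\pp$ minimal in $\Ass(R/I)$, where the saturation localizes to the unit ideal and $D_{\rat\tt\pp}$ of the unit ideal is $0$, so only the $\excessOps_\pp$-summand occurs.

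I expect the routine but delicate part to be exactly this ``saturation summand'': confirming that tensoring with $\KK(\tt)$ commutes with ideal saturation, that the identity of \Cref{ex:extension-of-p-is-maximal} persists with $J=I:\pp^\infty$ in place of $I$ (its proof being about which elements one inverts), that $D_\mm[-]$ ignores everything outside the localization at $\mm$ (so it may be computed from $I:\pp^\infty$), and that after saturating by $\pp$ and localizing at $\pp$ one is left with precisely the primary components indexed by the primes $\pp'\subsetneq\pp$, so that the well-founded induction genuinely closes. Once these compatibilities are recorded, the corollary is nothing more than the membership test extracted from the recursive reconstruction in the proof of \Cref{main-theorem}.
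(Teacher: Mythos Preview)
Your proposal is correct and follows essentially the same route the paper intends: the paper states the corollary with no proof, merely asserting that it ``results directly'' from \Cref{main-theorem}, and your argument is precisely the unpacking of that assertion---the well-founded induction on $\Ass(R/I)$, the decomposition $D_{\rat{\tt}{\pp}}[\rat{\tt}{I}]=D_{\rat{\tt}{\pp}}[\rat{\tt}{I}:(\rat{\tt}{\pp})^\infty]+\kappa(\rat{\tt}{\pp})\excessOps_\pp$, and the reduction of the saturation summand to strictly smaller primes are exactly the steps in the proof of \Cref{main-theorem}. The compatibilities you flag (flat base change commuting with saturation, and the local dual space depending only on the localization) are genuinely needed but are the sort of thing the paper's terse treatment leaves implicit.
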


\begin{remark}\label{ex:diff-primary-decomposition} 
For each associated prime $\pp$ of $I\subset R=\KK[\xx]$, choose a partition $\tt \sqcup \yy =\xx$, that is, choose free variables from the original variables. Then
\begin{enumerate}[(i)]
\item one may pick finitely many differential operators ${\mathfrak A}_\pp$ in $R \langle \p_\yy \rangle \subseteq W$
whose images $A_\pp$ in $W_{\kappa(\rat{\tt}{\pp})}$ form a basis of $E_{\rat{\tt}{\pp}}[\rat{\tt}{I}]$, and
\item we have $ 
    I = \{f \in R \mid D \dot f \in \pp \text{ for all } \pp\in \Ass(R/I),\, D\in {\mathfrak A}_\pp \}.
    $
\end{enumerate}
\end{remark}

The operators in ${\mathfrak A}_\pp$ are sometimes called \emph{Noetherian operators}; the reader may follow~\cite{cid2023primary} that unpacks the above remark --- albeit in a different language --- and connects the dimension of the excess dual space to the notion of arithmetic multiplicity.

\section{Example}
The following result provides a unique choice of a primary component for each associated prime, including the embedded primes.

For a primary ideal $Q$ define $\nil(Q) = \min\{m \mid (\sqrt Q)^m \subseteq Q\}$.
\begin{theorem}[Ortiz~\cite{ortiz1959certaine}]
\label{thm:Ortiz}
  Every ideal $I \subset R$ admits a unique irredundant primary decomposition $I = Q_1 \,\cap\,\dots\,\cap\, Q_r$
  with the following property: if $I=Q_1' \,\cap\,\dots\,\cap\, Q_r'$ is another irredundant primary decomposition,
  with $\sqrt{Q_i} = \sqrt{Q_i'}$ for $i=1,\dots,r$,  then for all $i$,
  \begin{enumerate}
    \item $\nil(Q_i) \leq \nil(Q_i')$, and
    \item if $\nil(Q_i) = \nil(Q_i')$, then $Q_i \subseteq Q_i'$.
  \end{enumerate}
\end{theorem}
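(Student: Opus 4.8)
The plan is to reduce to the local setting and there construct the canonical primary component directly as an intersection of \emph{symbolic-type} powers cut down by a length condition. First I would fix an associated prime $\pp$ of $I$ and pass to the localization $R_\pp$, where $\pp R_\pp$ is the maximal ideal; since the $\pp$-primary component of a primary decomposition is precisely the contraction of $I_\pp$ intersected against the other localizations, it suffices to produce a canonical $\pp R_\pp$-primary ideal $Q$ with $Q \cap (\text{lower components}) $ agreeing with $I_\pp$ and to show $Q$ has the stated minimality. Concretely, for each $m \ge 1$ consider the ideal $I + \pp^{(m)}$ (or, in the local ring, $I_\pp + (\pp R_\pp)^m$ contracted back), let $m_0$ be the least $m$ for which the resulting $\pp$-primary ideal stabilizes the $\pp$-part of $I$, i.e.\ for which $\bigl(I + \pp^{(m)}\bigr)_\pp \cap R$ no longer changes its contribution at $\pp$, and set $Q_i := $ that stabilized ideal. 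By construction $\nil(Q_i) = m_0$, and the key input is that $m_0$ is intrinsic to $I$ and $\pp$ — it does not depend on the choice of decomposition — which is exactly the uniqueness one wants.

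The second step is to translate the above into the language of local dual spaces, which is where the excerpt's machinery does real work. By \Cref{eq:infinite-intersection} and \Cref{ex:truncation}, the truncated dual spaces $D^{(d-1)}_\pp[I]$ (after the reduction to dimension zero of \Cref{sec:reduction-to-dim-0}, replacing $R$ by $\rat{\tt}{R}$ and $\pp$ by the maximal $\rat{\tt}{\pp}$) encode exactly the ideals $I + \pp^d$; the quotient by $D_\pp[I:\pp^\infty]$, i.e.\ the excess dual space $E_\pp[I]$ of \Cref{eq:finite-dual-quotient}, is finite-dimensional by \Cref{ex:excess-dual-has-finite-dim} and measures precisely the ``extra'' data of the $\pp$-primary component beyond the lower-dimensional part. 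The canonical $Q_i$ is then read off as the ideal whose local dual space at $\pp$ is $D_\pp[I:\pp^\infty] + (\text{the degree-filtered span generated by a minimal-order lift of } E_\pp[I])$: minimizing $\nil$ corresponds to choosing representatives of the excess classes of least differential order, and the inclusion statement (2) follows because, at equal order, the span of dual operators is forced to sit inside any competing choice.

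For the minimality claims themselves I would argue as follows. Given any other irredundant decomposition $I = Q_1' \cap \dots \cap Q_r'$ with $\sqrt{Q_i'} = \sqrt{Q_i} = \pp$, both $Q_i$ and $Q_i'$ localize at $\pp$ to ideals whose contraction-intersection with the lower components recovers $I_\pp \cap R$; hence $D_\pp[Q_i]$ and $D_\pp[Q_i']$ both surject onto $E_\pp[I]$ with kernel $D_\pp[I:\pp^\infty]$. Therefore $D_\pp[Q_i']$ contains a set of operators of order $\ge \nil(Q_i')-1$ mapping onto a spanning set of $E_\pp[I]$, forcing $\nil(Q_i') \ge m_0 = \nil(Q_i)$, which is (1). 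When $\nil(Q_i') = \nil(Q_i)$, the two dual spaces have the same order bound and the same image in $E_\pp[I]$ modulo the common kernel; since $D_\pp[Q_i]$ is built from the \emph{minimal} such span — the sum $D_\pp[I:\pp^\infty] + \kappa(\pp)\langle\p_\yy\rangle_{\le m_0-1}\text{-lift}$ chosen as small as possible — we get $D_\pp[Q_i] \subseteq D_\pp[Q_i']$, and dualizing back via \Cref{ex:truncation} reverses the inclusion to give $Q_i \subseteq Q_i'$, which is (2). Patching over all $\pp \in \Ass(R/I)$ and intersecting yields the global decomposition, and irredundancy is automatic since each $Q_i$ still contains the $\pp$-part of $I$ strictly beyond the lower components.

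The main obstacle I anticipate is step two's claim that the ``minimal-order lift'' of the excess dual space is canonically well-defined — a priori there could be several inequivalent minimal-order spanning sets of $E_\pp[I]$, and one must show the \emph{ideal} they generate (equivalently, the dual space $D_\pp[Q_i]$ they produce together with $D_\pp[I:\pp^\infty]$) is independent of that choice. Resolving this requires a careful filtration argument: the order filtration on $D_\pp[I]$ induces a filtration on $E_\pp[I]$, and one must check that the associated graded pieces pin down the dual space uniquely, so that while bases are non-canonical the generated $R$-submodule is not. This is the delicate point where the right $R$-module structure on $W_{\kappa(\pp)}$ (anti-differentiation, per \Cref{ex:multiplication-derivation}) must be used in an essential way rather than just the $\kappa(\pp)$-vector space structure.
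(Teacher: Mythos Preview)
The paper does not prove \Cref{thm:Ortiz}: it is quoted from Ortiz's 1959 paper and used as a black box. The example that follows illustrates how the excess dual space can be used to \emph{compute} the Ortiz component at an embedded prime, but no argument for existence or uniqueness is given in the paper itself. So there is no ``paper's own proof'' to compare against; your proposal is an attempt to supply one using the machinery of the surrounding sections.

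That said, your argument for part~(2) contains a genuine error in the direction of the duality. The correspondence $I \mapsto D_\mm[I]$ is inclusion-\emph{reversing}: if $Q_i \subseteq Q_i'$ then $D_\pp[Q_i] \supseteq D_\pp[Q_i']$, not the other way around. You construct $D_\pp[Q_i]$ to be ``as small as possible'' and then claim that dualizing the inclusion $D_\pp[Q_i] \subseteq D_\pp[Q_i']$ yields $Q_i \subseteq Q_i'$; in fact it yields $Q_i \supseteq Q_i'$, the opposite of what the theorem asserts. To get the Ortiz component you must build the \emph{largest} admissible dual space of order at most $m_0-1$, not the smallest---equivalently, $Q_i$ must be the \emph{smallest} $\pp$-primary ideal (with the given nil-index) that still works in the decomposition, which is what makes the uniqueness nontrivial. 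Your construction via ``minimal-order lifts of $E_\pp[I]$'' is therefore pointing in the wrong direction, and the filtration obstacle you flag at the end is a symptom of this: the ambiguity you worry about disappears once you realize you want the full truncated dual space $D_\pp^{(m_0-1)}[I]$ (which is canonical), not a minimal subspace of it.

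A second, related gap: your definition of $m_0$ and of the candidate $Q_i$ in the first paragraph is not made precise. The ideal $I + \pp^{(m)}$ is not $\pp$-primary in general, and ``the resulting $\pp$-primary ideal stabilizes the $\pp$-part of $I$'' does not name a specific ideal. You need to say exactly which ideal plays the role of $Q_i$ (for instance, the $\pp$-primary component of $I + \pp^{(m_0)}$, or the contraction of $I_\pp + (\pp R_\pp)^{m_0}$) and then verify both that it participates in an irredundant decomposition of $I$ and that it satisfies the two minimality properties. As written, neither the object nor the verification is pinned down.
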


The excess dual space gives a convenient way to obtain the unique irredundant primary decomposition guaranteed by \Cref{thm:Ortiz}. We demonstrate this on an example of 
$$
I = \ideal{x_1-x_2^3} \cap \ideal{x_2-x_1^3} \cap \ideal{x_1^3,x_2^3,x_1^2 x_2 - x_1 x_2^2}.  
$$
Taking $Q_1 = \ideal{x_1-x_2^3}$ and $Q_2 = \ideal{x_2-x_1^3}$ for the minimal primes, we would like to construct $Q_3$, an $\mm$-primary ideal for $\mm = \ideal{x_1,x_2}$ with the properties required in \Cref{thm:Ortiz}.

We can compute the excess dual space
$E_\mm[I] = D_\mm[I] / D_\mm[Q_1\cap Q_2]$
by iteratively obtaining the truncation
$$E^{(d)}_\mm[I] = D^{(d)}_\mm[I] / D^{(d)}_\mm[Q_1\cap Q_2]$$ until $E^{(d+1)}_\mm[I] = E^{(d)}_\mm[I]$. \Cref{ex:excess-dual-has-finite-dim} guarantees termination.

Truncated dual spaces can be obtained for instance with \verb|truncatedDual| method in \verb!NoetherianOperators! package \cite{chen2023noetherian} of Macaulay2 \cite{grayson2002macaulay2}. 
This computation produces two representatives, 
$\p_1\p_2$ and $\p_1^2 \p_2 + \p_1 \p_2^2$, 
for the cosets that form a basis of $E_\mm[I]$.
Now, knowing that $\nil Q_3 = 4$ and $$
D_\mm^{(3)}[I] = \Span\{1,\p_1,\p_2,\p_1^2,\p_1\p_2,\p_2^2,\p_1^3,\p_2^2,\p_1^2 \p_2 + \p_1 \p_2^2\}\,,
$$
we conclude $Q_3 = \ideal{x_1^2 x_2 - x_1 x_2^2} + \mm^4$. 

\section{Conclusion and future}
The concept of \DEF{local dual space} naturally generalizes the more than 100-year-old (known to Macaulay) concept of \emph{inverse system} defined at a rational point. It appears to be the most convenient for working with complex affine schemes in the framework of \DEF{numerical algebraic geometry}. 
The authors worked on the material in the current article a few years ago with the original intent of advancing the program of \DEF{numerical primary decomposition} that seeks to describe the underlying affine scheme in terms of the so-called \DEF{witness sets} for associate primes. While the job can be accomplished in absense of embedded components (see ~\cite{chen2022noetherian,chen2023noetherian}), the question of efficient computation for embedded components remains wide open. It remains challenging not only to obtain witness points on the varieties corresponding to embedded primes (see \cite{leykin2008numerical} and \cite{krone2017numerical} for existing algorithms) but also to determine the corresponding \DEF{excess dual spaces} (defined in this article).
Note that the purely symbolic algorithm described in~\cite{cid2023primary} is of a different nature. There, de facto, the classical primary decomposition is assumed to be known in order to derive a differential primary decomposition. However, in the framework of numerical algebraic geometry, one cannot assume that generators of a primary ideal in the classical decomposition are known: the original generators of the ideal defining the scheme are never rewritten or manipulated in a manner algorithms based on Gr\"obner bases do.   

We hope that in the future these challenges would be addressed making a hybrid numerical-symbolic approach to differential primary decomposition the most practical method for experimentation with ideals arising both in pure and applied algebraic geometry. 

\bibliography{references}
\bibliographystyle{plain}

\end{document}